\newtheorem{theorem}{Theorem}[section]
\newtheorem{lemma}[theorem]{Lemma}
\newtheorem{definition}[theorem]{Definition}
\newtheorem{remark}[theorem]{Remark}
\newtheorem{proof}[theorem]{Proof}
\numberwithin{equation}{section}
\DeclareMathOperator{\acot}{arccot}
\newcommand{\dint}{\; \mathrm{d}}
\title{Integrals of functions containing parameters}
\author{Robert M. Corless,\\
ORCCA and Applied Mathematics, University of Western Ontario\\
\email{rcorless@uwo.ca}}
\author{David J. Jeffrey,\\
ORCCA and Applied Mathematics, University of Western Ontario\\
\email{djeffrey@uwo.ca}}
\author{David R. Stoutemyer,\\
University of Hawaii,\\
\email{dstout@hawaii.edu}}
\newcommand{\bp}{\mathbf{p}}
\begin{document}

\begin{center}
  \textbf{{\Large Integrals of functions containing parameters}}
\end{center}

\noindent Robert M. Corless, David J. Jeffrey and David R. Stoutemyer

\section{Introduction}\label{sec:Introduction}

\begin{flushright}
``\textsl{There is always a well--known solution to every human problem\\
-- neat, plausible, and wrong}.''\\
H. L. Mencken~\cite{MenckenAfflatus}\footnote{The truth of this statement is reinforced by the fact that it is often misquoted.}
\par
\end{flushright}

%\noindent \includegraphics[width=4.25in]{SimpleButWrong50percent.png}
\begin{center}
  \href{https://www.gocomics.com/nonsequitur/2016/01/20}{The \textsl{Non Sequitur} cartoon for 20 January 2016}
\end{center}

% Nonsequitur 20 January 2016
Calculus students are taught that an indefinite integral is defined only
up to an additive constant, and as a consequence generations of students have assiduously added ``\ $+C$\ " to their calculus homework.
Although ubiquitous, these constants rarely garner much attention, and typically loiter without intent
around the ends of equations, feeling neglected.
There is, however, useful work they can do; work which is particularly relevant in the contexts of integral tables and computer algebra systems. We begin, therefore, with a discussion of the context, before returning to coax the constants out of the shadows and assign them their tasks.

Tables of integrals are inescapable components of calculus textbooks~\cite{ Essex, Stewart},
and there are
well known reference books that publish voluminous collections~\cite{Abramowitz1964,Gradsh,PrudnikovBM,CRCtables}.
A modern alternative to integral tables is provided by computer algebra systems (CAS),
which are
readily % already becoming
available on computing platforms ranging from phones to supercomputers.
These systems evaluate integrals using a mixture of integral tables and algebraic algorithms.
A feature shared by tables and computer systems is the fact that the formulae usually contain parameters.
No one would want a table of integrals that contained separate entries for $x$, $x^2$ and $x^{42}$, rather than one entry for $x^n$, and many tables include additional parameters
for user convenience; for example, there will be entries for integrals containing
$\sin ax$, rather than the sufficient, but less convenient, $\sin x$.

Although parameters add greatly to the scope and convenience of integral tables,
there can be difficulties and drawbacks occasioned by their use.
We shall use the word \textsl{specialization} to describe the action of substituting specific values (usually numerical, but not necessarily) into a formula.
The  \textsl{specialization problem} is a label for a cluster of
issues % problems
associated with formulae and their specialization,
the
difficulties % problems
ranging from inelegant results to invalid ones.
For example, in \cite{JeffreyRich2010} an example is given in which the evaluation
of an integral by specializing a general
formula misses a particular case for which a more elegant expression is possible.
The focus here, however, is on situations in which specialization leads to invalid or incorrect results.
To illustrate the problems, consider
an example drawn from a typical collection~\cite[ch8, p346, (5)]{Timofeev1948}:
\begin{equation}\label{eq:AMultiparameterGenericAntideriv}
I_1=\int\left(\alpha^{\sigma z}-\alpha^{\lambda z}\right)^2\,dz =
\frac1{2\ln\alpha}\left(\frac{\alpha^{2\lambda z}}{\lambda}+\dfrac{\alpha^{2\sigma z}}{\sigma}-\dfrac{4\alpha^{(\lambda+\sigma)z}}{\lambda+\sigma}\right)\ .
\end{equation}
Expressions equivalent to this are returned by Maple, Mathematica and many other systems, such as the Matlab symbolic toolbox.

Before we proceed, we acknowledge that some readers may question whether anyone at all competent would write the integral this way: surely there are better ways?  Why not transform $\alpha^{\sigma z}$ into $\exp( p z)$, where $p=\sigma\ln\alpha$,
and thus reduce the number of parameters?
Or scale the variable of integration to absorb, say, the $\lambda$ ?
Such actions are possible for people who are free to recast problems in convenient ways,
for example, if \eqref{eq:AMultiparameterGenericAntideriv} were an
examination question devoid of context.
CAS, in contrast, are obliged to deal with expressions as they are presented,
either by users or by other components within the system itself;
and \textsl{in the general case} some of these ``obvious'' simplifications and transformations are surprisingly difficult to discover automatically.  Humans are still superior at simplification, we believe.

Returning to the answer as returned by the CAS, it is easy to see that the specialization $\sigma=0$ leaves
the left side of \eqref{eq:AMultiparameterGenericAntideriv}, the integrand, well defined,
but the expression for its integral on the right-hand side is no longer defined.
If we pursue this further, we see that there are multiple specializations for which
\eqref{eq:AMultiparameterGenericAntideriv} fails, \textsl{viz.} $\alpha=0$, $\alpha=1$, $\lambda=0$, $\sigma=0$, $\lambda=-\sigma$, and combinations of these.
The question of how or whether to inform computer users of these special cases has been discussed
in the CAS literature many times~\cite{CorlessJeffrey:Simple}.

This brings us to the second theme of this discussion: comprehensive and generic results.
A comprehensive result lists explicit expressions for each set of special parameter values, while a generic result
is correct for `most' or `almost all' values of the parameters.
Let us consider how a comprehensive result for \eqref{eq:AMultiparameterGenericAntideriv} would look.

\begin{equation}\label{eq:ResultOfEg1ForSeekContinuityFalse-1}
I_1=\begin{cases}
\displaystyle \frac1{2\lambda\ln \alpha}\left(\alpha^{2\lambda z}-
   \alpha^{-2\lambda z}-4z\lambda\ln\alpha\right)\ ,
&\left[\begin{tabular}{l}
    $\lambda+\sigma=0$, $\sigma\neq0$\ ,\\
    $\alpha\neq 0$\ ,\ $\alpha\neq1$\ \ ;
  \end{tabular}
\right.\\[15pt]
\displaystyle z+\frac1{2\lambda\ln\alpha}\left(\alpha^{\lambda z}(\alpha^{\lambda z}-4)\right)\ ,
&\left[\begin{tabular}{l}
          $\sigma=0$, $\lambda\neq0$\ ,\\
          $\alpha\neq 0$\ ,\ $\alpha\neq1$\ \ ;
          \end{tabular}\right.\\[15pt]
\displaystyle z+\frac1{2\sigma\ln\alpha}\left(\alpha^{\sigma z}(\alpha^{\sigma z}-4)\right)\ ,
& \left[\begin{tabular}{l}
          $\lambda=0$\ ,\ $\sigma\neq0$\\
          $\alpha\neq 0$\ ,\ $\alpha\neq1$\ ;
        \end{tabular}
\right.\\[15pt]
0 \ ,
&\left[\begin{tabular}{l}
          $\alpha=1 $\ ;
        \end{tabular}
\right.\\[5pt]
0 \ ,
& \left[\begin{tabular}{l}
          $\alpha=\lambda=\sigma=0$\ ;
        \end{tabular}
\right.\\[5pt]
\mathrm{ComplexInfinity}\ ,
& \left[\begin{tabular}{l}
    $\alpha=0$\ , \\
    $\Re(\lambda z)\,\Re(\sigma z)<0\ ;$
  \end{tabular}
\right.\\[18pt]
\mathrm{Indeterminate}\ ,
&\left[ \begin{tabular}{l}
    $\alpha=0$\ , \\
    $\Re(\lambda z)\,\Re(\sigma z)\geq 0$\ ;
  \end{tabular}\right.
\\[15pt]
\displaystyle \frac1{2\ln\alpha}\left(\frac{\alpha^{2\lambda z}}{\lambda}+\frac{\alpha^{2\sigma z}}{\sigma}-\frac{4\alpha^{(\lambda+\sigma)z}}{\lambda+\sigma}\right)\ ,
& \textrm{otherwise (generic case)}
\\
\end{cases}
\end{equation}
Conditions are here shown as in printed tables; otherwise they could be presented using the logical $\vee$ and $\wedge$ operators.

To generalize, we denote a function depending on parameters by $f(z; \bp)$, with $z$ being the main argument,
here the integration variable, and $\bp$ a list of parameters. The definition is then:
\begin{definition}
A \textsl{comprehensive antiderivative} of a parametric function $f(z;\bp)$ is a piecewise function $F(z;\bp)$ containing explicit consequents\footnote{\textsl{consequent}: following as a result or effect; the second part of a conditional proposition, dependent on the antecedent.} for each special case of the parameters.
\end{definition}

Designers of computer algebra systems are reluctant to return comprehensive expressions
by default, because they can quickly lead to unmanageable computations,
and as well many users might regard them as \textsl{too much information}.
Instead, tables and CAS commonly adopt the approach of identifying a \textsl{generic} case, which is then the only
expression given; in the case of CAS, the generic case is typically returned without explicitly showing the conditions on the parameters. In the case of tables, any special-case values would be used to simplify the integrand and then the resulting integrand and its antiderivative would be displayed as a separate entry
somewhere else in the table.
\begin{definition}
A \textsl{generic antiderivative} is one expression chosen from a comprehensive antiderivative that is valid for
the widest class of constraints.
\end{definition}

\begin{remark}
The above definition is an informal one, since the choice of which result to designate as generic may
include personal taste.
\end{remark}

\section{Continuity in parameters}
% The following two sentences seem out of place
We now come to the third theme of the discussion: the treatment of removable discontinuities.
Consider the improper integral
\begin{equation}\label{eq:improperint}
  \int_{0}^{1} \ln \left( x(1-x)\right)\,dx =\bigg[ x\ln\left(x(1-x)\right) -2x-\ln(1-x)\bigg]_0^1\ .
\end{equation}
The expression for the integral contains a removable discontinuity at each end, and a computer system
(and we hope students) would automatically switch to limit calculations to obtain the answer.
% This seems more like a beginning sentence
In this section,  by extending
the handling of removable discontinuities to constants of integration, we introduce a new idea for handling the specialization problem in
integration\footnote{The specialization problem is not confined to integration.
Any formula which uses parameters to cover multiple cases is likely to have some specialization problems.
The ideas presented here, however, apply specifically to integration.}.
The idea is new in the sense that we do not know of any published discussion, but it originated with William Kahan~\cite{Kahan59} and was circulated informally possibly as early as $1959$.
%The main idea is to modify traditional expressions for integrals by paying more attention to the
%constants of integration, so that expressions become continuous with respect to their parameters.

The example \eqref{eq:ResultOfEg1ForSeekContinuityFalse-1} dramatically illustrates the potential size of comprehensive antiderivatives, but is, unsurprisingly, too cumbersome for explaining ideas.
We turn to simpler examples.
We begin with the comprehensive antiderivative known to all students of
calculus\footnote{Most textbooks use $x^n$, but we wish to emphasize continuity and use $\alpha$ instead of $n$.
We also use $z$ and $\alpha$ because we are thinking in the complex plane, and we prefer
$\ln z$ instead of $\ln|x|$ for the same reason.}:
\begin{equation}\label{eq:intresipz}
  \int z^\alpha \dint z = \begin{cases} \ln z\ , & \mbox{if } \alpha= -1\ , \\
                     \dfrac{z^{\alpha+1}}{\alpha+1}\ , & \mbox{otherwise (generic case)}.
                     \end{cases}
\end{equation}
Substituting $\alpha=-1$ into the generic case gives $1/0$ and not $\ln z$. Often when a substitution fails, a limit will succeed, so we try the limit as $\alpha\to -1$. Disappointingly, this also fails, but we can examine \textsl{how} the limit fails by expanding the generic case as a series about the pole at $\alpha=-1$, that is, treating $\alpha+1=\varepsilon$ as a small quantity.
\begin{equation}\label{eq:loglimit}
      \frac{z^{\alpha+1}}{\alpha+1}=\frac{e^{\varepsilon\ln z}}{\varepsilon}=\frac{1+\varepsilon\ln z +O\left(\varepsilon^2\right)}{\varepsilon}
      \ .\end{equation}
If we can remove the leading term of the series, namely $1/\varepsilon$, then the next term gives us
the desired $\ln z$.
But an integral needs a
constant\footnote{The lack of a constant in \eqref{eq:intresipz} betrays our CAS allegiance: it is a rare CAS that adds a constant, because the user can easily add one (and name the constant) by typing, for instance, ``\texttt{int(f,x)+K}".}! So, an equally correct integral is
\[ \int z^\alpha \dint z = \dfrac{z^{\alpha+1}}{\alpha+1}-\frac{1}{\alpha+1}\ ,\]
and now the limit
%\footnote{Applying l'H\^ opital's rule with respect to $\alpha$ is a good challenge for students!}
as $\alpha\to -1$ is precisely $\ln z$.
Thus the comprehensive antiderivative,
\begin{equation}\label{eq:xtoncomp}
  \int z^\alpha \dint z =\begin{cases}
                      \ln z\ , & \mbox{if } \alpha= -1\ , \\[5pt]
                      \dfrac{z^{\alpha+1}-1}{\alpha+1}\ , & \mbox{otherwise}.
                    \end{cases}
\end{equation}
is continuous with respect to $\alpha$, and the generic antiderivative now contains the exceptional case as a removable discontinuity~\cite{Kahan59}.

\begin{figure}
\caption{A parametrically discontinuous integral. The real part of each consequent of the comprehensive integral \eqref{eq:DisconInt1OnzSqrtzSqMinusaSq} is plotted. The surface shows the generic expression plotted as a function of the integration variable $z$ and the parameter $\alpha$. The surface curves up to become singular all along the line $\alpha=0$. The detached curve hovering below the surface in the plane $\alpha=0$ is the special case integral.}
\label{fig:arccotdis}
\centering
\includegraphics[scale=0.5]{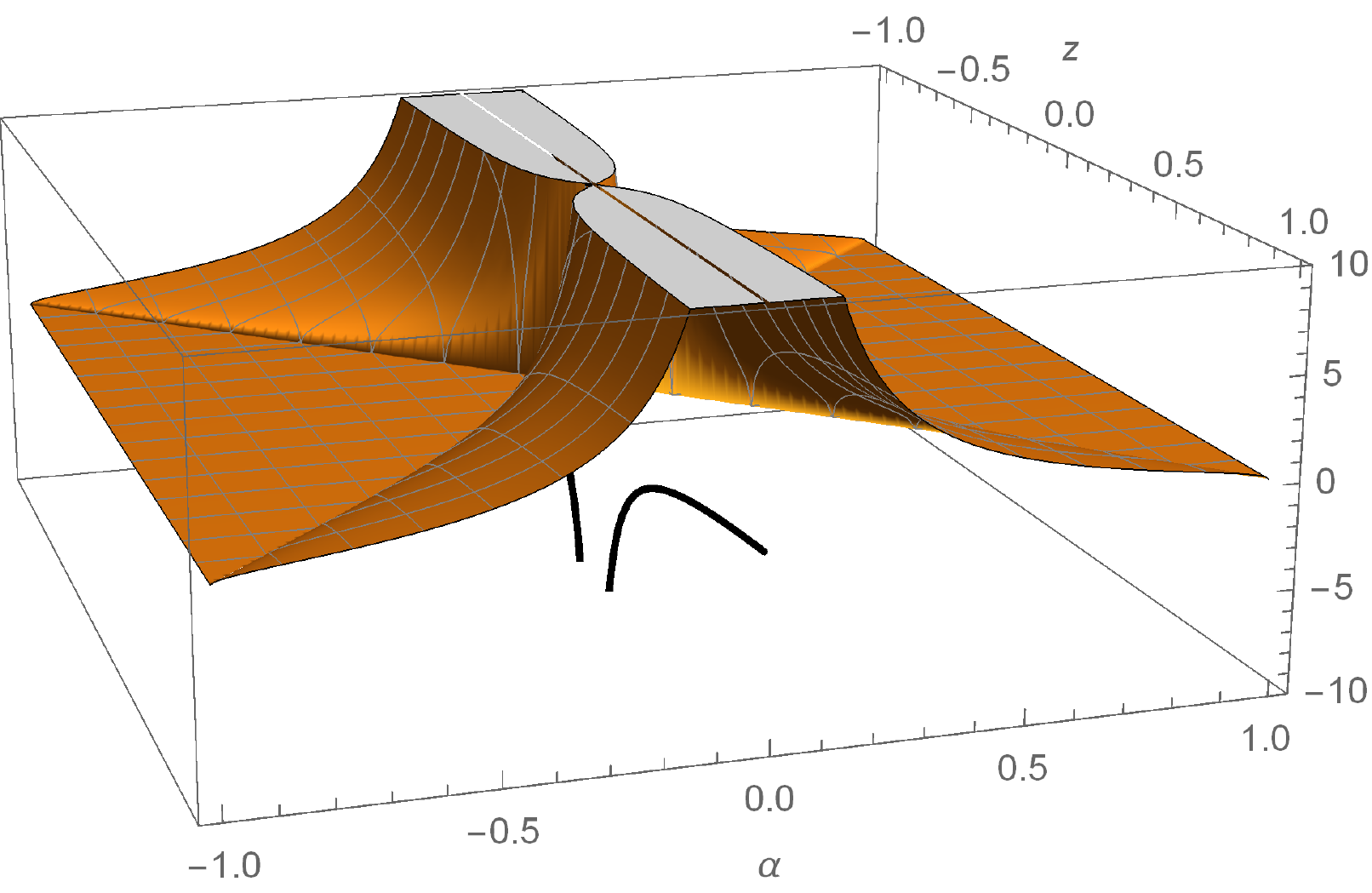}
\end{figure}

\begin{definition}
Let a function $F(z;\bp)$ be an indefinite integral of an integrand $f(z;\bp)$. That is,
\[ F(z;\bp)=\int f(z;\bp)\dint z\ .
\]
%Assume that
If a point $\bp_c$ in parameter space exists at which $F(z;\bp)$ is \textsl{discontinuous} with respect to one or more members of $\bp$, %then
and if
a function $C(\bp)$, which serves as a constant of integration with respect to $z$, %and which
has the property
that $F(z;\bp)+C(\bp)$ has only a removable discontinuity and thus can be made \textsl{continuous} with respect to $\bp$ at $\bp_c$, then $C(\bp)$ is called a Kahanian constant of integration\footnote{Since it is a function of $\bp$, one can question whether it should be called a constant. It is constant with respect to $z$, and this seems a useful extension of calculus terminology.}.
\end{definition}

\begin{remark}
The definition does not guarantee the existence of $C(\bp)$.
If, for some value of $\bp$, the integral does not exist, then there will be no Kahanian.
\end{remark}
%\begin{definition}
%A comprehensive antiderivative in which each consequent contains an appropriate
%Kahanian constant of integration is a Kahanian antiderivative.
%\end{definition}

A second example shows the effect of Kahanian constants graphically.
To begin with, consider the comprehensive antiderivative
\begin{equation}\label{eq:DisconInt1OnzSqrtzSqMinusaSq}
\int \frac{dz}{z\sqrt{z^2-\alpha^2}}=\;\begin{cases}
-\dfrac{1}{\sqrt{z^2}}\ , & \alpha=0\ ,\\[10pt]
\dfrac1\alpha\;{\acot\dfrac{\alpha}{\sqrt{z^2-\alpha^2}}}\ , & \mbox{generic}\ .
\end{cases}
\end{equation}
We write $\sqrt{z^2}$ rather than $|z|$ so that the expression is valid for non-real $z$.
Figure~\ref{fig:arccotdis} shows this antiderivative as a three-dimensional plot
treating both $z$ and $\alpha$ as real variables. The generic expression is then
the surface shown in the plot; it becomes singular along the line $\alpha=0$
(as $\alpha^{-2}$).
The special case $\alpha=0$ is shown as a detached curve confined to the plane $\alpha=0$. It can be seen hovering forlornly underneath the surface of the generic integral, dreaming of
gaining an invitation to the party.
For $|z|<|\alpha|$, the values of the integral are non-real, but only the real part is plotted, because that displays the properties of interest\footnote{The definition of arccot varies between computer systems and amongst references, and even between different printings of the same reference work~\cite{corless2000according}. The plots shown here were made with Mathematica, and other systems such as Maple and Matlab may create different plots.}.
% Kahan himself laments that we, as a community, cannot agree on a definition for arccot.}.

\begin{figure}
\caption{A parametrically continuous integral. The real part of each consequent of the comprehensive integral \eqref{eq:CompInt1OnzSqrtzSqMinusaSq} is plotted. The surface shows the generic expression plotted as a function of the integration variable $z$ and the parameter $\alpha$. The surface curves down, and is singular only at $z=\alpha=0$. The special case curve lies happily within the surface in the plane $\alpha=0$.}
\label{fig:arccotcon}
\centering
\includegraphics[scale=0.4
]{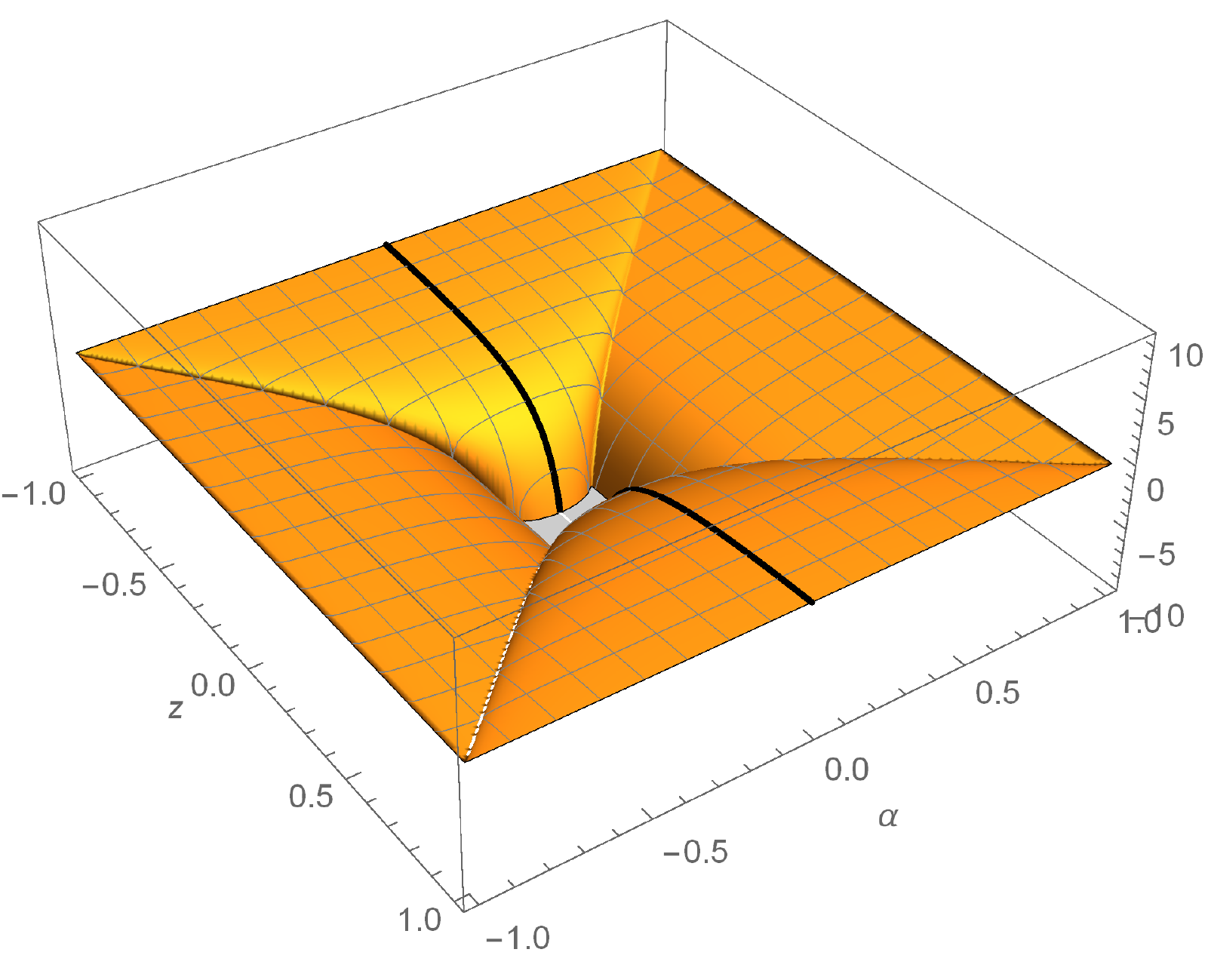}
\end{figure}

To achieve continuity, we now add Kahanian constants to each consequent of the comprehensive antiderivative.
We show the new constants in bold print.
\begin{equation} \label{eq:CompInt1OnzSqrtzSqMinusaSq}
\int \frac{dz}{z\sqrt{z^2-\alpha^2}}=\;\begin{cases}
-\dfrac{1}{\sqrt{z^2}}+\boldsymbol{1}\ , \\[10pt]
{\dfrac1\alpha\;{\acot\dfrac{\alpha}{\sqrt{z^2-\alpha^2}}}}
-\boldsymbol{\dfrac1\alpha\;{\acot\dfrac{\alpha}{\sqrt{1-\alpha^2}}}}\ .
\end{cases}
\end{equation}

Figure \ref{fig:arccotcon} shows the real part of the new expressions.
%Because the Kahanian terms are real, the imaginary parts of the
%various expressions are unchanged, and so are not compared.
%Except at $z=0$ where one can see a truncated
%infinitesimally narrow spike,
The extra term in the generic (the lower) consequent in (\ref{eq:CompInt1OnzSqrtzSqMinusaSq}) subtractively cancels
a parametric pole asymptotic to $1/\alpha^{2}$ as $\alpha\rightarrow0$,
which can be seen in the
generic consequent of result (\ref{eq:DisconInt1OnzSqrtzSqMinusaSq}).
This makes the parametrically continuous expression now approach the
same values from both sides of $\alpha=0$, converting that parametric
pole to an indeterminate slit in an otherwise parametrically continuous
surface---a removable singularity. Moreover, the extra term $1$
in the $\alpha=0$ consequent of (\ref{eq:CompInt1OnzSqrtzSqMinusaSq})
raises the space curve exactly the right amount to make it contiguous
with the surface on both sides, thus removing that removable singularity.

\section{\!Advantages of parametrically continuous integrals}
\label{sec:advantages}
We have defined the specialization problem as the failure of a generic formula when
particular values are substituted for parameters, with the generic integrals above being examples.
One way to avoid specialization problems completely would be to specify all parameters in advance, that is, delay starting a calculation until the parameters are known.
This, however, negates the very power and generality that algebra extends to us.
As well, it is not always the case that one can know parameters in advance; for example, a parameter might depend on the outcome of an intermediate computation.

The specialization problem can be looked at another way, a way perhaps more suitable for computer algebra. There are variations possible in the order in which operations are applied.
For example, in Maple syntax, where as usual operations take precedence from the inside out, it is the difference between
\begin{verbatim}
                    subs( n=-1, int( x^n, x ) ) ;
                    int( subs( n=-1, x^n ), x ) .
\end{verbatim}
The first gives a division by zero error; the second gives $\ln x$.
The challenge, then, is to retain the generality of algebra, while finding ways to react to exceptional values of parameters.

\subsection{Definite integration}
There are at least two ways to evaluate a special case of a known parametric definite integral, especially in the computer-algebra setting.
The first way is the typical human way: by working on each limit separately.
Thus to evaluate
\[\int_{a}^{b} f(x,\bp)\dint x\ ,\]
one first finds an indefinite integral $F(x,\bp)$, then makes any substitutions $F(x,\bp_c)$, then evaluates and simplifies $F(b,\bp_c)-F(a,\bp_c)$. For this approach, the Kahanian term is important, because it allows a substitution to be performed correctly, possibly as a limit $\lim_{\bp\to\bp_c} F(x,\bp)$.

For the second way, let us explicitly notate the presence of a Kahanian term, so that the indefinite integral is $F(x,\bp)+C(\bp)$, where now $F$ is any function that satisfies $F^\prime=f$ and $C(\bp)$ is the Kahanian constant.
We now perform the definite integral \textsl{before} specializing.
\begin{align}
 \int_{a}^{b} f(x,\bp)\dint x &= \left[\ F(b,\bp) + C(\bp)\ \right] -
                                  \left[\ F(a,\bp) + C(\bp)\ \right] \nonumber \\
                              &= F(b,\bp)-F(a,\bp)\ .
\end{align}
Now we must evaluate this expression as $\bp\to\bp_c$. If we simplify each term separately, the calculation may fail, but \textsl{keeping the terms together}, the calculation succeeds.

For example,
\[\lim_{n\to -1} \int_a^b x^n\dint x = \lim_{n\to -1} \left(\frac{b^{n+1}}{n+1}-\frac{a^{n+1}}{n+1}\right)=\ln b-\ln a\ ,\]
using the methods of \eqref{eq:loglimit}.
Again we point out that separately the limits of each term need not exist.
%If a user complains to a computer system that the first method failed, a possible reply is ``but you asked the question the wrong way: if only you had told us that you wanted a definite integral ...". Of course the system must still know to evaluate the limit globally, rather than term by term.
Computer systems, like people, can sometimes succeed when asked one way and not another.
With the Kahanian form, either approach succeeds.

\subsection{Resonance}
We now give an example of Kahanian terms used in the solution of a differential equation.
A standard topic in physics and engineering is resonance.
The equation of a forced, frictionless, harmonic oscillator
%\footnote{and like the mathematical model of a cow, frictionless.}
is
\begin{equation}\label{eq:ForcedOscillator}
  \frac{d^2 x(t)}{dt^2}+k^2 x(t) = \cos\omega t\ ,
\end{equation}
It has the generic general solution
\begin{equation}\label{eq:ForcesOscGeneric}
   x(t) = C_1 \cos kt + C_2 \sin kt + \frac{\cos\omega t}{k^2-\omega^2}\ ,
\end{equation}
where $C_1$ and $C_2$ depend on initial conditions, and the last term is the particular integral.
The phenomenon of resonance
occurs when $\omega^2=k^2$ and the particular integral becomes invalid.
We shall now derive a particular integral containing Kahanian terms that enable the particular integral to have a valid limit in the resonant case.
We shall use the method of variation of parameters~\cite{BenderOrszag, JeffreysJeffreys}.
We start from the solutions to the homogeneous equation: $x_1=\cos kt$ and $x_2=\sin kt$.
Then the particular integral is given by $x_p=u_1 x_1 + u_2 x_2$\,, where
\[
 u_1 = -\int \frac{x_2\cos\omega t}{W} \dint t \ , \qquad
 u_2 = \int\frac{x_1\cos\omega t}{W} \dint t \ ,
\]
and $W=x_1x_2^\prime-x_2x_1^\prime$ is the Wronskian.
Evaluating the integrals in the usual way, we obtain
\begin{align}\label{eq:VarParamsKahan}
  u_1 &= \frac{\cos((k-w)t)}{2k(k-w)}+\frac{\cos((k+w)t)}{2k(k+w)}\ , \\
  \label{eq:VarParamsKahan2}
  u_2 &= \frac{\sin((k-w)t)}{2k(k-w)}+\frac{\sin((k+w)t)}{2k(k+w)}\ .
\end{align}
When the expression for $x_p$ is simplified, we are led to \eqref{eq:ForcesOscGeneric}.
If, however, we change to Kahanian antiderivatives, we obtain
\begin{align}\label{eq:VarParams}
  u_1 &= \frac{\cos((k-w)t)-1}{2k(k-w)}+\frac{\cos((k+w)t)-1}{2k(k+w)}\ ,
\end{align}
and no Kahanian is needed for $u_2$, so it is still given by \eqref{eq:VarParamsKahan2}.
The new particular integral is
\begin{equation}\label{eq:NewOscillator}
  x_p(t) = \frac{\cos\omega t-\cos kt}{k^2-\omega^2}\ ,
\end{equation}
and now the limits $\omega\to \pm k$ give
\begin{equation}\label{eq:Newresonant}
  x_p(t) = \frac{t}{2k} \sin kt\ ,
\end{equation}
showing an oscillation that increases with time---a hallmark of resonant behaviour\footnote{This is an example of a computation that \textsl{ought} to be routine, for a human using familiar trigonometric identities. Instead of thinking back to how you solved it in your first course in differential equations, correctly accounting for resonance, imagine that you are a computer subroutine, having to turn out a good answer by a mechanical algorithm. In that context, the Kahanian approach makes automation easier.}.

Both Mathematica and Maple return \eqref{eq:ForcesOscGeneric} without provisos,
and to obtain
\eqref{eq:Newresonant}, one must substitute $\omega=k$ and rerun the solution.
The lesson from this example is that, because of their black-box automation,
computer algebra systems should implement and exploit comprehensive
results from the most basic operations on up through the most sophisticated.
For example, it would greatly help to have comprehensive limits and
comprehensive series, as well.

\section{Computing a Kahanian antiderivative}\label{sec:How-to-continuous}

The derivation used in \eqref{eq:xtoncomp} appears \textsl{ad hoc}, but a more systematic procedure is possible. Instead of computing an indefinite integral, we calculate a ``semi-definite'' integral.

\begin{definition}
  A \textbf{parametric semidefinite integral} is
one of the form
\begin{equation} \label{def:semidef}
P(z;\bp)=\int_{A}^{z}f(\tau;\bp)\dint \tau,
\end{equation}
where the lower limit $A$ is called the \textbf{anchor point}, and is constant.
\end{definition}
\begin{remark} Some people use the term \emph{indefinite integral} to describe \eqref{def:semidef}, allowing one point (usually the lower limit) to be fixed and the other to vary. We, however, have earlier used the term \emph{indefinite integral} to mean \textsl{any} antiderivative or primitive, as is common parlance where we work.
To avoid confusion and to fix attention on the anchor point, we have introduced the term ``semi-definite'', although we are somewhat indefinite (that is, semi-indefinite) about the hyphen in the term.
\end{remark}

\begin{lemma}
In \eqref{def:semidef}, let $z$ be finite and let $f(\tau;\bp)$ be continuous with respect to $\bp$ and with respect to $\tau$ in the domain of interest containing $z$,  $\tau$ and $A$; $A$ is a fixed finite numeric constant.
Then $P(z;\bp)$
is continuous with respect to all finite values of its parameters
$\bp$, except perhaps for removable singularities.
\end{lemma}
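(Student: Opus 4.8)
The plan is to prove continuity of $P(z;\bp)$ pointwise in parameter space: I fix an arbitrary target value $\bp_0$ lying in the interior of the domain of interest and show $P(z;\bp)\to P(z;\bp_0)$ as $\bp\to\bp_0$. Since the anchor $A$ is a fixed constant and does not move with $\bp$, the two integrals share the same limits of integration, so their difference collapses to a single integral of the difference of integrands,
\[
P(z;\bp)-P(z;\bp_0)=\int_A^z\bigl[f(\tau;\bp)-f(\tau;\bp_0)\bigr]\dint\tau\ .
\]
Because both $A$ and $z$ are finite, the path of integration $\Gamma$ from $A$ to $z$ is a compact set of finite length $L$; parametrising it as $\tau=\gamma(t)$ with $t\in[0,1]$ recasts $P$ as an ordinary integral over the compact interval $[0,1]$, which is the form best suited to the estimate to follow.

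The key step is uniform continuity. I would first choose a closed, bounded neighbourhood $N$ of $\bp_0$ small enough to stay inside the domain of interest. On the compact product set $\Gamma\times N$ the integrand $f$ is continuous by hypothesis, hence \emph{uniformly} continuous there. Given $\varepsilon>0$, uniform continuity then supplies a single $\delta>0$ for which $|f(\tau;\bp)-f(\tau;\bp_0)|<\varepsilon$ holds simultaneously for \emph{every} $\tau\in\Gamma$, provided $\bp\in N$ and $|\bp-\bp_0|<\delta$. Integrating this uniform bound along $\Gamma$ gives $|P(z;\bp)-P(z;\bp_0)|\leq\varepsilon L$, which is arbitrarily small; that is precisely continuity of $P$ at $\bp_0$. (Equivalently one could take a sequence $\bp_n\to\bp_0$ and pass the limit under the integral sign, the maximum of $|f|$ over $\Gamma\times N$ serving as a dominating constant, but I prefer the $\varepsilon$--$\delta$ route since it avoids invoking the machinery of dominated convergence.) Nothing in this argument uses $z$ beyond its finiteness and the fixedness of $A$, so genuine continuity holds throughout the region where $f$ satisfies the stated hypotheses.

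It remains to account for the clause ``except perhaps for removable singularities''. These occur at isolated exceptional values $\bp_c$---such as $\alpha=-1$ in the anchored antiderivative \eqref{eq:xtoncomp}---where a closed-form representation of $P$ degenerates to an indeterminate $0/0$ form even though the value defined by the integral is perfectly well behaved. The point to argue is that anchoring at a fixed lower limit ties the constant of integration to $\bp$ in one continuous way across all nearby parameter values, so that integration, being a smoothing operation, cannot manufacture a pole: any discontinuity surviving in a chosen algebraic form must therefore be of $0/0$ type, with a limit equal to the integral itself and hence removable. I would make this precise by showing that $\lim_{\bp\to\bp_c}P(z;\bp)$ exists and coincides with $P(z;\bp_c)$ computed directly from the anchored integral, so the apparent singularity is filled in by its own limiting value. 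The same reasoning covers the borderline situation in which the integrand develops an \emph{integrable} singularity in $\tau$ at $\bp_c$: convergence of the integral still forces the discontinuity to be removable rather than a genuine divergence.

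The hard part will not be the uniform-continuity estimate, which is routine, but the removable-singularity clause. The delicate claim is that at the exceptional parameter values the \emph{only} obstruction to continuity is an indeterminacy of the chosen closed form, never an actual blow-up of the integral. The leverage for this is exactly the anchor point---the feature distinguishing a semidefinite integral from an arbitrary antiderivative---and making rigorous the assertion that a fixed anchor converts would-be parametric poles into removable singularities is the crux of the proof.
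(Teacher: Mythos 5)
Your argument is correct, and it is worth seeing how it lines up with the paper's proof, which has two layers. The paper's first layer is a citation: it declares the lemma ``a consequence of classical theorems about the interchange of limits when functions are uniformly continuous'' and points to Apostol. Your compactness-plus-uniform-continuity estimate on $\Gamma\times N$ is exactly the content of that cited result, written out in full, so for the continuity assertion you have supplied the proof the paper only gestures at. The paper's second layer, its ``conceptual alternative,'' is different from anything you do: it writes $P(z;\bp)=G(z;\bp)-G(A;\bp)$ for a generic antiderivative $G$, and argues that any discontinuity whose location depends only on $\bp$ appears in \emph{both} terms and hence cancels, leaving at worst an indeterminate (removable) form. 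You instead obtain the removable-singularity clause directly from the continuity of $P$: since the anchored integral is genuinely continuous, any closed-form expression that agrees with it off an exceptional parameter set can only fail there by an indeterminacy whose limit is the integral's own value. These are two views of the same phenomenon; yours is the logically cleaner one, since the paper's ``cancellation'' is heuristic, while your version reduces the clause to something already proved. That points to the one misjudgement in your write-up: you call the removable-singularity clause ``the crux'' and leave it programmatic (``I would make this precise by showing\ldots''), but given your first part it is a one-line corollary, not a remaining difficulty. Wherever $f$ satisfies the hypotheses (e.g.\ at $\alpha=-1$ for the integrand $z^\alpha$), your estimate already gives $\lim_{\bp\to\bp_c}P(z;\bp)=P(z;\bp_c)$; the singularity in \eqref{eq:xtoncomp} lives in the formula, not in the function, exactly as you observe. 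So your proof is in fact complete, and stronger than the paper's own, which never rigorously justifies the cancellation step.
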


\begin{proof}
This theorem is a consequence of classical theorems about the interchange of limits when functions are uniformly continuous: see for instance~\cite{Apostol57}. As a conceptual alternative, consider the following.
Let $G(z;\bp)$ be a generic antiderivative of $f(z;\bp)$. Then
\begin{equation}\label{eq:SemiDefViaSubstitution}
P(z;\bp)=G(z;\bp)-G(A;\bp)\ ,
\end{equation}
and all discontinuities whose locations depend only on the parameters occur both
in $G(z;\bp)$ and in $G(A;\bp)$. Therefore
they \textsl{cancel} in the semidefinite integral (\ref{eq:SemiDefViaSubstitution}),
leaving at worst an expression that is \textsl{indeterminate} at the
locations of those discontinuities, making the discontinuities removable.
\end{proof}
\begin{remark}
The finiteness of the interval in the hypotheses is necessary.  If the interval of integration is unbounded, then the lemma need not be true.  For example, for real $x$ we have (the signum function is $-1$ for negative arguments, $+1$ for positive arguments, and zero for zero argument)
\begin{equation}
    \int_{-\infty}^x \frac{\sin pt}{t}\,dt = \frac{\pi}{2}\mathrm{signum}(p) + \int_0^{px} \frac{\sin u}{u}\,du
\end{equation}
which has a jump discontinuity at $p=0$ although the integrand $\sin(pt)/t$ is continuous there.  We are indebted to a referee for this example.
\end{remark}

\begin{remark}
The complexity of the Kahanian depends on the choice of anchor point
$A$. We want to avoid values of $A$ that make $G(A;\bp)$
indeterminate or take an infinite magnitude.
The least complex Kahanian is $0$. Therefore it is worth comparing the
complexities of Kahanian constants corresponding to different anchor points,
and preferring any that yield a Kahanian of $0$.
\end{remark}
\subsection{Earlier examples}
The example \eqref{eq:xtoncomp} is obtained from the semidefinite integral
$\int_{1}^{z} \tau^\alpha \dint \tau$.
Our opening example \eqref{eq:AMultiparameterGenericAntideriv} can be modified by
using an anchor point $A=0$ to calculate a Kahanian
constant: we obtain
\begin{align}\label{eq:OpeningWithKahanian}
  \int\left(\alpha^{\sigma z}-\alpha^{\lambda z}\right)^2\,dz =& \nonumber \\
  \boldsymbol{\frac{-(\lambda-\sigma)^2}{2\lambda\sigma(\lambda+\sigma)\ln\alpha}}&+
\frac1{2\ln\alpha}\left(\frac{\alpha^{2\lambda z}}{\lambda}+
\dfrac{\alpha^{2\sigma z}}{\sigma}-\dfrac{4\alpha^{(\lambda+\sigma)z}}{\lambda+\sigma}\right)\ .
\end{align}
It is straightforward to verify that by taking the limits $\alpha\to 0$, $\lambda\to 0$ and
$\lambda\to -\sigma$, each of the special cases in \eqref{eq:ResultOfEg1ForSeekContinuityFalse-1}
is reproduced. That is, the use of a Kahanian makes each of the special cases of the comprehensive integral into a removable discontinuity.  Therefore, the conceptual advance of replacing \emph{evaluation using substitution} by \emph{evaluation using limits}, as discussed previously, can be usefully applied.

\section{Implications}
The danger of exceptional values is as old as algebra.
Early enthusiasts, amazed at algebraic power, often overlooked exceptions.
Eventually, though, experts such as Cauchy and Weierstrass worked to check the enthusiasm,
pointing out that care was needed.
Out of this care, modern analysis was born; out of the enthusiasm, modern algebra.
Hawkins \cite{Hawkins1977weier} writes of Cauchy
\begin{quotation}
``Nevertheless, Cauchy did not accept the particular algebraic foundation
used by Lagrange \ldots{} Cauchy, however, had well-founded doubts
about the automatic general interpretation of symbolic expressions.
He had warned that ``most (algebraic) formulas hold true only under
certain conditions, and for certain values of the quantities they
contain.''
\end{quotation}

The dichotomy between analysis and algebra survives to this day, and can be seen in the present discussion
of integration.
Within computer algebra, integration is based on modern algebraic algorithms, such as the Risch algorithm and its generalizations~\cite{Risch1970, Bronstein2005}, for solving the elementary antidifferentiation problem.
Algebraic algorithms explicitly exempt the constant of integration from consideration.
Indeed, when verifying an integration formula by differentiation, all piecewise constants are given zero derivatives.

\subsection{Computer algebra system design}

In the early days of computer algebra, implementers and users were
equally delighted at the ability of systems to obtain generic results.
Now, however, the low-hanging fruit has been harvested, and implementers
can and should make another pass through their fundamental functionalities
such as integration, the solving of systems of equations and inequalities,
limits and series to make as many results as possible comprehensive.
The robustness of high-level functionality demands it.

\subsection{Mathematical tables}

Computer algebra and the internet make it decreasingly likely that
there will be a completely new printed table of integrals.
There will, however, probably be new editions of existing tables, because there
is something to be learned by scanning a table of closely-related
integrals, as opposed to seeing results one at a time from a computer
algebra system, with no organizing principle.

Printed integral tables would be impractically bulky if every entry
were a piecewise result of the kind in this article. However, many
of the special cases could be listed once in the most appropriate
place, then explicitly cross referenced from more generic cases. New
editions can also make the relevant domains as general as possible
and more explicitly obvious.

Moreover, an \textsl{on-line} version could assemble each piecewise
result as needed. There could even be a computer algebra system involved.
The difference of such a \textsl{mathematical knowledge base} from
a bare computer algebra system is that a user can learn by browsing
through related examples that follow a clear organizing principle.

\subsection{Mathematical practice}

\begin{flushright}
``\textsl{A lot of times, people don't know what they want until
you show it to them}.''\\
--Steve Jobs
\par\end{flushright}

Mathematicians are increasingly frequent users of computer algebra
and other educational or research mathematical software. We suspect
that when users start encountering more comprehensive results, they
will become disappointed in software that does not provide them. As
a side benefit, users might become more careful about not overlooking
special cases or relevant issues such as domain enforcement and continuity
issues with their manually derived results. Perhaps editors and referees
will also pay more attention to such details in articles they are
reviewing.

Meanwhile we hope that this article serves as a warning that when
a computer algebra system returns a generic parametric antiderivative,
the user should ponder the result, to determine whether there exist special cases,
and if so, compute them with separate integrands.

We think that the idea of comprehensive antiderivatives will be welcomed
by many mathematicians,
perhaps after they are exposed to it through using future versions of
computer algebra systems that offer built-in
comprehensive antiderivatives.
%We are also developing our own program for computing comprehensive anti-derivatives
%and Kahanians.
We admit, however, that \textsl{parametrically continuous}
antiderivatives will be adopted only slowly, because they are not
always necessary, and they are usually more complicated than the simpler, incorrect answer.
In addition, from a numerical point of view, they can
suffer from catastrophic numerical cancellation, which requires higher precision (or perhaps some numerical analysis experience) to overcome.
Nonetheless, the point remains that when they are necessary, they are crucial to obtaining a correct and complete
result.

\subsection{Implications for mathematics education}

The calculus curriculum is already quite full, and it seems unlikely
that the textbook examples and the expected exercise or test results
could all be comprehensive results. However it does seem worthwhile
to introduce students to the concept and have them do some simple
examples so that those who proceed into mathematical careers (or careers
that use mathematics) are more thorough and careful.
The concept and ideas behind a Kahanian antiderivative also seem worthwhile,
but we do not expect that to be as prevalent.

\subsection{Adoption issues}

We are well aware that an abrupt transition to Kahanian parameterized
antiderivatives is impossible. One of us implemented the non-piecewise
generic portion of Kahan's example (\ref{eq:xtoncomp}) in an
early version of a computer algebra program. The complaints were immediate,
numerous and strong. Even many mathematics teachers said that it was
incorrect even though it differed only by a constant from the traditional
antiderivative. Perhaps the space-saving omission of a generic integration
constant $C$ from most integral tables led some mathematics teachers
to forget what they taught, or perhaps they were simply concerned
that it would alarm, and hence intimidate, many of their students. Another
of us taught the concept to their beginning calculus students. They
hated it.

\subsection{The pedagogical value of ``lies to children''}
The Wikipedia entry on \textsl{Lie-to-children} quotes \cite{JeffreyCorlessLies}, ``The pedagogical point is to avoid unnecessary burdens on the student’s first encounter with the concept.'' Asking students, on their first encounter with antiderivatives, to worry about the continuity of their answer, in addition to other worries, might seem unreasonable.  Asking computer algebra systems to cater to the needs of first-year students as well as the needs of people who solve differential equations with parameters might also seem unreasonable, without some sort of switch to ``expert mode'', say.
On the other hand, if only experts learn to be careful with parametric continuity, then both education and software have done a disservice to their audiences.

\subsection{Closing remarks}
In one sense this paper merely offers a minor technical correction
to the current practice of computing indefinite integrals. However,
the total impact of this minor correction is potentially large because the
current practice is taught early at the university level and to very
many students---most of whom do not go on to become mathematics majors.
Moreover, computer algebra systems have become widespread, including
good free ones, some of which are available for smartphones. Most
current computer algebra systems apply current textbook rules and
amplify the effects of fundamental ``minor'' errors such as the
error in continuity that we address in this article. So in practice,
the correction we present is important.

In order to promote the ideas of comprehensive antiderivatives
and Kahanian constants, we have developed
a Mathematica program that computes comprehensive
anti-derivatives and Kahanian forms. It can be used as an enhancement to Mathematica's
own \texttt{Integrate} command.
The program, as this paper, hopes to promote the analytical view common to several older calculus texts, such
as those by Apostol~\cite{Apostol57} or Courant and John~\cite{CourantJohn},
over the more common current practice
of making algebraic antidifferentiation the fundamental object of study.
%Such a statement
%is on its face preposterous because the fundamental theorem of calculus
%ensures that, done properly, these must be equivalent.
%We acknowledge, though, that correct Kahanian
%integrals are \textsl{rarely} simpler; sometimes they are
%the same, but mostly they are more complicated.
%It is unfortunate that more correct does not imply more beautiful.

The enhanced integration command is available on the website

\href{https://www.uwo.ca/apmaths/faculty/jeffrey/research/integration/index.html}{https://www.uwo.ca/apmaths/faculty/jeffrey/research/integration/index.html}

\bibliography{Kahanian}
\bibliographystyle{plain}

\end{document}